\algrenewcommand{\algorithmicrequire}{\textbf{Input:}}
\algrenewcommand{\algorithmicensure}{\textbf{Output:}}
\algrenewcommand{\algorithmiccomment}[1]{//#1}
\DeclareSymbolFont{largesymbols}{OMX}{yhex}{m}{n}
\DeclareMathAccent{\widehat}{\mathord}{largesymbols}{"62}
\newcommand{\Q}{{\mathbb Q}}
\newcommand{\Z}{{\mathbb Z}}
\newcommand{\N}{{\mathbb N}}
\newcommand{\U}{\mathcal{U}}
\renewcommand{\O}{\mathcal{O}}
\newcommand{\lcm}{\textnormal{lcm}}
\newcommand{\Bass}[3]{u_{#1,#2}(#3)}
\newcommand{\Hc}{{\mathcal{H}}}
\newcommand{\inv}{{^{-1}}}
\newcommand{\GEN}[1]{\left\langle #1 \right\rangle}
\newtheorem{theorem}{Theorem}[section]
\newtheorem{proposition}[theorem]{Proposition}
\newtheorem{corollary}[theorem]{Corollary}
\newtheorem{lemma}[theorem]{Lemma}
\theoremstyle{definition}
\theoremstyle{remark}
\numberwithin{equation}{section}
\begin{document}

\title[Writing units as a product of Bass units]{Writing units of integral group rings of finite abelian groups as a product of Bass units}

\author{Eric Jespers}
\address{Department of Mathematics, Vrije Universiteit Brussel,
Pleinlaan 2, 1050 Brussels, Belgium}
\email{efjesper@vub.ac.be}

\author{\'{A}ngel del R\'io}
\address{Departamento de Matem\'{a}ticas, Universidad de Murcia,  30100 Murcia, Spain}
\email{adelrio@um.es}

\author{Inneke Van Gelder}
\address{Department of Mathematics, Vrije Universiteit Brussel,
Pleinlaan 2, 1050 Brussels, Belgium}
\email{ivgelder@vub.ac.be}

\thanks{The first and second authors have been partially supported by the Ministerio de Ciencia y Tecnolog\'{\i}a of Spain and Fundaci\'{o}n S\'{e}neca of Murcia. The first author is partially supported by Fonds voor Wetenschappelijk Onderzoek Vlaanderen-Belgium and Onderzoeksraad Vrije Universiteit Brussel. The third  author is supported by Fonds voor Wetenschappelijk Onderzoek Vlaanderen-Belgium.
}

\date{\today}

\subjclass[2010]{16U60, 16S34, 13P99; Secondary 20C05}

\keywords{Integral group rings, units, finite abelian groups}

\begin{abstract}
We give a constructive proof of the theorem of Bass and Milnor saying that if $G$ is a finite abelian group then the Bass units of the integral group ring $\Z G$ generate a subgroup of finite index in its units group $\U(\Z G)$. Our proof provides algorithms to represent some units that contribute to only one simple component of $\Q G$ and generate a subgroup of finite index in $\U(\Z G)$ as product of Bass units.
We also obtain a basis $B$ formed by Bass units of a free abelian subgroup of finite index in $\U(\Z G)$ and give, for an arbitrary Bass unit $b$, an algorithm to express $b^{\varphi(|G|)}$ as a product of a trivial unit and powers of at most two units in this basis $B$.
\end{abstract}

\maketitle

\section{Introduction}

Let $G$ be a finite group, $g$ an element of $G$ of order $n$ and $k$ and $m$ positive integers so that $k^m \equiv 1 \mod n$. Then
    $$\Bass{k}{m}{g}=(1+g+\dots + g^{k-1})^{m}+\frac{1-k^m}{n}(1+g+g^2+\dots+g^{n-1})$$
is a unit of the integral group ring $\Z G$. The units of this form were introduced by Bass in \cite{bass1966} and are known as Bass units or Bass cyclic units. Bass proved that if $G$ is a cyclic group then the Bass units of $\Z G$ generate a subgroup of finite index in $\U(\Z G)$. (The group of units of a ring $R$ is denoted $\U(R)$.) Bass and Milnor extended this result to finite abelian groups. In this paper we will refer to this result as the Bass-Milnor Theorem while the Bass Theorem refers to the result for cyclic groups. The Bass Theorem also provides a basis consisting of Bass units for a free abelian subgroup of finite index in $\U(\Z G)$, provided $G$ is cyclic. As far as we know, no basis consisting of Bass units for a free abelian subgroup of finite index is known for an arbitrary abelian group $G$.

It is worth mentioning that in more recent work Marciniak and Sehgal \cite{2005MarciniakSehgal,2006MarciniakSehgal} investigate the so called group of generic units $\textnormal{Gen}(G)$ in $\U(\Z G)$, for $G$ a finite abelian group. It turns out that $\textnormal{Gen}(G)$ is generated by the Hoechsmann units \cite{Hoechsmann1992} (for the definition see \cite[p. 34]{Sehgal1993}), and some power of a Hoechsmann unit is a product of two Bass units. Furthermore, the group generated by the Bass units $\Bass{k}{\varphi(|G|)}{g}$ is contained in $\textnormal{Gen}(G)$. For other results, we refer the reader to \cite{2005MarciniakSehgal,2006MarciniakSehgal}.

In the remainder of the paper we assume that $G$ is a finite abelian group. If $u\in \U(\Z G)$ then, by the Bass-Milnor Theorem, some power of $u$ is a product of Bass units. Unfortunately the proofs by Bass and Bass-Milnor do not provide a method to express some power of $u$ as a product of Bass units. The aim of this paper is to obtain such a method for some relevant units. To do so we first obtain a new proof of the Bass-Milnor Theorem which follows a different approach than the proofs of Bass and Bass-Milnor. In order to have a feeling of the features of this new proof first we recall the main steps of the Bass and Bass-Milnor proofs and outline those of our proof.

Let $\Gamma$ be a finitely generated abelian group. The rank of all free abelian subgroups of finite index in $\Gamma$ is an invariant of the group, called the rank of $\Gamma$. Assume that $\Gamma$ has rank $r$ and let $u_1,\dots,u_r\in \Gamma$. Then $\GEN{u_1,\dots,u_r}$ has finite index in $\Gamma$ if and only if $u_1,\dots,u_r$ are multiplicatively independent.
For example, $\U(\Z G)$ is finitely generated abelian and has rank $r=\frac{1+k_2+|G|-2c}{2}$, where $c$ is the number of cyclic subgroups of $G$ and $k_2$ is the number of elements of $G$ of order $2$. This result is due to \cite{Higman1940} (for the above formula for $r$ see also \cite{1969AyoubAyoub}). Bass took a concrete list of $r$ Bass units of $\Z G$ for $G$ cyclic and proved that they are multiplicatively independent. To do so, Bass used the Bass Independence Theorem which in turn uses the Franz Independence Lemma (see \cite[11.3, 11.8]{Sehgal1993} for details).
Bass and Milnor proved, using K-Theory, that the group generated by the units of integral group rings of cyclic subgroups of $G$ has finite index in $\U(\Z G)$. However, both the independency and the K-Theory arguments are useless when trying to write a given unit as a product of Bass units.

Alternatively, assume that $\Gamma$ is a subgroup of finite index in a finitely generated abelian group $\Lambda$ and that we know a subset $X$ of $\Lambda$ which generates a subgroup of finite index in $\Lambda$. Let $Y$ be a subset of $\Gamma$. Then $\GEN{Y}$ has finite index in $\Gamma$ if and only if for every $x\in X$ there is a positive integer $m$ such that $x^m \in \GEN{Y}$. In our proof we take $\Gamma=\U(\Z G)$, $Y$ the set of Bass units of $\Z G$, $\Lambda=\U(\O)$, where $\O$ is the unique maximal order of $\Q G$, and $X$ the set of cyclotomic units of $\Q G$, which are defined as follows.

Let $\zeta_n$ denote a complex root of unity of order $n$. If $n>1$ and $k$ is an integer coprime to $n$ then
    $$\eta_{k}(\zeta_n)=\frac{1-\zeta_n^k}{1-\zeta_n}=1+\zeta_n+\zeta_n^2+\dots+\zeta_n^{k-1}$$
is a unit of $\Z[\zeta_n]$, the $n$-th cyclotomic ring of integers. We extend this notation by setting
    $$\eta_k(1)=1.$$
The units of the form $\eta_k(\zeta_n^j)$, with $j,k$ and $n$ integers such that $\gcd(k,n)=1$, are called the cyclotomic units of $\Q(\zeta_n)$.
A classical result, which goes back to Kummer, states that the cyclotomic units of $\Q(\zeta_n)$ generate a subgroup of finite index in $\U(\Z[\zeta_n])$ \cite{1982Washington}.

By a well-known theorem of Perlis and Walker, $\Q G$ is isomorphic to a direct product of cyclotomic fields. For simplicity, we consider this isomorphism as an equality $\Q G=\prod_{i=1}^k \Q(\zeta_{n_i})$. Then $\Z G$ is an order of $\Q G$ and $\O=\prod_{i=1}^k \Z[\zeta_{n_i}]$ is the unique maximal order of $\Q G$. In particular, $\Z G \subseteq \O$. Moreover, $\Gamma=\U(\Z G)$ has finite index in $\Lambda=\U(\O)$ (see \cite[Lemma~4.6]{Sehgal1993}). The cyclotomic units of $\Q G$ are, by definition, the elements of $\Q G$ which project to a cyclotomic unit of $\Q(\zeta_{n_i})$ for some $i=1,\dots,k$ and project to $1$ in the remaining components. Having in mind that the cyclotomic units of $\Q(\zeta_{n_i})$ generate a subgroup of finite index in $\U(\Z[\zeta_{n_i}])$, we conclude that the set $X$ of cyclotomic units of $\Q G$ generates a subgroup of finite index in $\Lambda$. Thus the Bass-Milnor Theorem is equivalent to the following proposition.

\begin{proposition}\label{OneComponentBass}
Let $G$ be a finite abelian group and let $\Q G = \prod_{i=1}^k \Q(\zeta_{n_i})$, the realization of Perlis-Walker Theorem.
Then for every cyclotomic unit $u$ of $\Q G$ there is a positive integer $m$ such that $u^m$ is a product of Bass units of $\Z G$.
\end{proposition}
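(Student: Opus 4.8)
The plan is to use that a Bass unit, although not supported on a single simple component of $\Q G$, is given in each component by a cyclotomic number. If $\pi_\chi\colon\Q G\to\Q(\zeta_{n_i})$ denotes the projection onto the component determined by a linear character $\chi$ of $G$ of order $n_i$, then the definition of the Bass unit gives at once
\[
  \pi_\chi\big(\Bass{k}{m}{g}\big)=\eta_k(\chi(g))^{m}\qquad(\text{with }\eta_k(1)=1),
\]
so $\Bass{k}{m}{g}$ is the element of $\O$ whose $\chi$-coordinate is $\eta_k(\chi(g))^{m}$. I would also record a few elementary identities: $\Bass{k}{m_1}{g}\,\Bass{k}{m_2}{g}=\Bass{k}{m_1+m_2}{g}$; $\Bass{kl}{m}{g}=\Bass{k}{m}{g}\,\Bass{l}{m}{g^{k}}$, reflecting $\eta_{kl}(\zeta)=\eta_{k}(\zeta)\,\eta_{l}(\zeta^{k})$; the telescoping relation $\prod_{l=1}^{e-1}\eta_{k}(\zeta_e^{l})=1$ when $\gcd(k,e)=1$; and, for a prime $p\nmid k$ and a primitive $p$-th root of unity $\omega$, the norm identity $\prod_{j=0}^{p-1}\eta_{k}(\zeta\,\omega^{j})=\eta_{k}(\zeta^{p})$, which comes from $\prod_{j=0}^{p-1}(X-c\,\omega^{j})=X^{p}-c^{p}$ at $X=1$. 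Next I would reduce the statement. Since $\eta_{k}(\zeta)$ depends only on $k$ modulo the order of $\zeta$, we may take $\gcd(k,|G|)=1$. Fix the component in which $u$ is nontrivial: it comes from a character $\chi$ with $\ker\chi=K$, $[G:K]=d=n_i$, and $u$ equals some $\eta_k(\zeta)$ there; choose $g\in G$ with $\chi(g)=\zeta$ (when $\zeta$ is a primitive $d$-th root, $gK$ then generates the cyclic group $G/K$). With $m=\varphi(|G|)$, which makes $k^{m}\equiv 1$ modulo the order of every element of $G$ and hence makes every Bass unit below well defined, $\Bass{k}{m}{g}$ already has the correct value $\eta_{k}(\zeta)^{m}$ in the target component; the whole task is then to remove its contributions to every other component by multiplying by further Bass units.

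The engine for this is the norm identity together with the observation that replacing $g$ by $g^{a}$ raises $\chi'(g)$ to the $a$-th power in \emph{every} component $\chi'$ at the same time. Hence, for a prime $p\mid N=|g|$ and an integer $a$ coprime to $|G|$, the product $\prod_{j=0}^{p-1}\Bass{k}{m}{g^{a(1+jN/p)}}$ acts on every coordinate simultaneously: in the components where the order of $\chi'(g)$ is divisible by the exact power of $p$ in $N$ it turns $\eta_k(\chi'(g))^m$ into $\eta_k(\chi'(g)^{ap})^m$ — lowering by one the $p$-part of that order — while in the other components it just raises $\eta_k(\chi'(g))^m$ to the $p$-th power (up to the twist $\zeta\mapsto\zeta^a$). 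Alternating such operations over the relevant primes and inserting the Bass units $\Bass{k}{m}{g^{a'}}$ needed to cancel the debris they create, one telescopes $\Bass{k}{m}{g}$ down to a unit that is trivial in every component except the target one, equal there to a fixed positive power of $\eta_k(\zeta)$; that unit is the desired power of $u$. When $G=\GEN{g}$ is cyclic this already completes the argument, and the bookkeeping is governed by a Möbius inversion over the divisor lattice of $N$; in particular one recovers a constructive form of Bass's theorem, the target component now ranging over all of $\Q G$.

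For an arbitrary finite abelian $G$ the same telescoping has to be done with the Bass units $\Bass{k}{m}{x}$ for $x$ ranging over $G$, organised by the lattice of subgroups of $G$ containing $K$ rather than by the powers of a single $g$. The product of Bass units one is after is encoded by an integer-valued function $c$ on $G$ recording the exponents of the $\Bass{k}{m}{x}$, chosen by an inclusion--exclusion over those subgroups — in the spirit of the classical expression of the primitive central idempotents of $\Q G$ as $\Z$-combinations of the elements $\widehat{H}=|H|^{-1}\sum_{h\in H}h$ — but \emph{twisted} so that, in every component $\chi'$ outside the $\Q$-class of $\chi$, the exponents $\sum_{\chi'(x)=\xi}c(x)$ become independent of $\xi$ as $\xi$ ranges over the nontrivial values of $\chi'$, whereupon that coordinate collapses through $\prod_{l=1}^{e-1}\eta_k(\zeta_e^{l})=1$, whereas in the component of $\chi$ it does not. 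I expect the construction and verification of such a $c$ to be the main obstacle — making the arithmetic of cyclotomic numbers mesh with the combinatorics of the character group of $G$ so that \emph{all} of the (possibly very many) unwanted components are killed at once. The remaining points look routine: fixing the single exponent $m=\varphi(|G|)$ so that the identities above can be freely combined, and checking that on the target component one does land on a power of the prescribed cyclotomic number.
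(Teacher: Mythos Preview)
Your proposal assembles the right ingredients --- the projection formula $\pi_\chi(\Bass{k}{m}{g}) = \eta_k(\chi(g))^m$, the norm identity, the telescoping relation --- and correctly identifies the goal of killing every component except the target one. But you stop precisely where the work begins: you acknowledge that constructing and verifying the exponent function $c$ is ``the main obstacle'' and leave it open. As it stands this is an outline, not a proof; the condition you impose on $c$ (that $\sum_{\chi'(x)=\xi}c(x)$ be independent of the nontrivial value $\xi$ for every $\chi'$ outside the class of $\chi$, while retaining a nonzero contribution on the $\chi$-component) is a large linear system over the character group, and you give no argument that it has an integral solution with the required behaviour on the target component.

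The paper closes exactly this gap, but by a double induction (on $|G|$ and on $d=[G:H]$) rather than a closed M\"obius formula. The decisive move you are missing is this: instead of building up via products over cyclic subgroups of prime order, average over \emph{all of} $H=\ker\chi$ at once, setting $b=\prod_{h\in H}\Bass{k}{m}{ah}$ with $aH$ a generator of $G/H$. Then $\rho_H(b)=\eta^{m|H|}$ is already correct, and for every $K\ne H$ one computes $\rho_K(b)=\eta_k(\rho_K(a)^{t_K})^{m\,h_K}$ with $t_K=[H:H\cap K]$; a short check shows this residual cyclotomic unit always falls into the range of one of the two induction hypotheses (either it lives in $\Q S$ for a proper subgroup $S\lneq G$ when the argument $\rho_K(a)^{t_K}$ is non-primitive, or it has strictly smaller index $d_K<d$). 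This recursive structure is what makes the construction explicit and algorithmic; your unrolled inclusion--exclusion, if it can be made to work, would amount to the same induction expanded out, but with the bookkeeping you yourself flag as the obstacle still to be done.
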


The proof of Proposition~\ref{OneComponentBass} (actually of Lemma~\ref{BassProjectionsL}) is constructive and avoids K-Theory and independence arguments. As a result the proof provides an algorithm that for a cyclotomic unit $\eta$ as input, returns $m$ and an expression of $\eta^m$ as a product of Bass units (see Algorithm~\ref{alg1}). This is the first result of the paper. The second result consists in giving a concrete basis $B$ formed by Bass units for a free abelian subgroup of finite index in $\U(\Z G)$. The proof of the fact that $B$ is a basis also provides an algorithmic method to write some power of any arbitrary Bass unit as a product of elements of $B$ (see Section~\ref{SectionBasis}).

Observe that our result provides an algorithmic method to write some power of a given $u\in \U(\O)$ as a product of Bass units, or even of elements of $B$, as far as we can write some power of each projection of $u$ in the simple components of $\Q G$ as a product of cyclotomic units.
However we do not know of any algorithmic method that starting from an element $u\in \U(\Z[\zeta_n])$ as input, returns an expression of some power of $u$ as a product of cyclotomic units.

\section{A new proof of the Bass-Milnor Theorem}

Throughout the rest of the paper $G$ is a finite abelian group. We denote the group generated by the Bass units of $\Z G$ by $B(G)$.
In this section we prove Proposition~\ref{OneComponentBass}. This provides a new proof of the Bass-Milnor Theorem which states that $B(G)$ has finite index in $\U(\Z G)$.

First of all we obtain a precise realization of Perlis and Walker Theorem which states that there is an isomorphism $f:\Q G \rightarrow \prod_d \Q(\zeta_d)^{k_d}$, where $k_d$ denotes the number of cyclic subgroups of $G$ of order $d$. This isomorphism is realized as follows. Let $\Hc=\Hc(G)$ denote the set of subgroups $H$ of $G$ such that $G/H$ is cyclic. For every subgroup $H\in \Hc$ we fix a linear representation $\rho_H$ of $G$ with kernel $H$. We also denote by $\rho_H$ the linear extension  of $\rho_H$ to $\Q G$. If $d=[G:H]$ then $\rho_H(\Q G)=\Q(\zeta_d)$, where $\zeta_{d}$ denotes a primitive $d$-th root of unity. Then
    $$f=\prod_{H\in \Hc} \rho_H:\Q G \longrightarrow \prod_{H\in \Hc} \Q(\zeta_{[G:H]})$$
is an isomorphism. This isomorphism is the same as the one of Perlis and Walker \cite{1950PerlisWalker} (see for example also \cite{2002SehgalMilies}), since $k_d$ equals the number of subgroups $H\in \Hc$ such that $[G:H]=d$.

We will use the following equalities (the order of an element $g\in G$ is denoted by $|g|$)
    \begin{eqnarray}\label{BassCyclotomic}
    \rho_H(\Bass{k}{m}{g}) &=& \eta_k(\rho_H(g))^m, \quad (H\in \Hc, g\in G, k^m\equiv 1 \mod |g|), \\
    \prod_{i=0}^{n-1} (1-X\zeta_n^i) &=& 1-X^n. \label{1-Xn}
    \end{eqnarray}

Let $\xi$ be a root of unity and assume that $k$ is coprime to $n$ and the order of $\xi$. If $\xi^n\ne 1$ then, using (\ref{1-Xn}), we obtain
    \begin{eqnarray*}
    \prod_{i=0}^{n-1} \eta_k(\xi \zeta_n^i) = \prod_{i=0}^{n-1} \frac{1-\xi^k \zeta_n^{ki}}{1-\xi{\zeta_n}^i} =
    \frac{\prod_{i=0}^{n-1} (1-\xi^k \zeta_n^i)}{\prod_{i=0}^{n-1} (1-\xi{\zeta_n}^i)} = \frac{1-\xi^{kn}}{1-\xi^n} = \eta_k(\xi^n).
    \end{eqnarray*}
Otherwise, i.e. if $\xi^n=1$, then $\xi\zeta_n^j=1$ for some $j=0,1,\dots,n-1$. Then, using that $k$ is coprime to $n$, we deduce that
    \begin{eqnarray*}
    \prod_{i=0}^{n-1} \eta_k(\xi \zeta_n^i) = \prod_{i=0,i\neq j}^{n-1} \frac{1-\zeta_n^{k(i-j)}}{1-\zeta_n^{i-j}} =
    \frac{\prod_{i=1}^{n-1} (1-\zeta_n^{ki})}{\prod_{i=1}^{n-1} (1-\zeta_n^i)} = 1 = \eta_k(\xi^n).
    \end{eqnarray*}
This proves the following equality for every primitive $l$-th root of unity $\xi$
    \begin{equation}\label{ProductCyclotomic}
    \prod_{i=0}^{n-1} \eta_k(\xi \zeta_n^i) =
    \eta_k(\xi^n) \quad (\gcd(k,nl)=1).
    \end{equation}

\begin{lemma}\label{ProductBassCyclic}
Let $g\in G$, $H\in \Hc$ and $K$ be an arbitrary subgroup of $G$. Let $h=|H\cap K|$, $t=[K:H\cap K]$ and let $k$ and $m$ be positive integers such that $(k,t)=1$ and $k^m \equiv 1 \mod |gu|$ for every $u\in K$.
Then
    $$\prod_{u\in K} \rho_H(\Bass{k}{m}{gu}) = \eta_k(\rho_H(g)^t)^{mh}.$$
\end{lemma}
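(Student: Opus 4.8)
The plan is to unwind the left-hand side using the two identities (\ref{BassCyclotomic}) and (\ref{ProductCyclotomic}) already established, after first understanding how the representation $\rho_H$ behaves on the subgroup $K$.

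First I would apply (\ref{BassCyclotomic}): the hypothesis $k^m\equiv 1 \mod |gu|$ for every $u\in K$ is exactly what is needed to write $\rho_H(\Bass{k}{m}{gu})=\eta_k(\rho_H(gu))^m=\eta_k(\rho_H(g)\rho_H(u))^m$. Hence
$$\prod_{u\in K}\rho_H(\Bass{k}{m}{gu})=\Bigl(\prod_{u\in K}\eta_k(\rho_H(g)\rho_H(u))\Bigr)^m,$$
so it suffices to show that the inner product equals $\eta_k(\rho_H(g)^t)^h$.

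Next I would analyse $\rho_H|_K$. Since $\rho_H$ has kernel $H$, its restriction to $K$ has kernel $H\cap K$, so the image $\rho_H(K)\cong K/(H\cap K)$ is a cyclic subgroup of $\C^\times$ of order $t$, and hence is precisely the group of all $t$-th roots of unity; moreover each of its elements is the image of exactly $h=|H\cap K|$ elements of $K$. Choosing a generator $\omega$ of $\rho_H(K)$ (a primitive $t$-th root of unity) and noting that $\{\omega^i : 0\le i<t\}$ is exactly the set of $t$-th roots of unity, the inner product rearranges as
$$\prod_{u\in K}\eta_k(\rho_H(g)\rho_H(u))=\Bigl(\prod_{i=0}^{t-1}\eta_k(\rho_H(g)\omega^i)\Bigr)^h=\Bigl(\prod_{i=0}^{t-1}\eta_k(\rho_H(g)\zeta_t^i)\Bigr)^h.$$
Then I would apply (\ref{ProductCyclotomic}) with $n=t$ and $\xi=\rho_H(g)$, which is a primitive $l$-th root of unity for $l=|\rho_H(g)|$. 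The required hypothesis $\gcd(k,tl)=1$ holds: we are given $(k,t)=1$, and since $1\in K$ the hypothesis gives $k^m\equiv 1 \mod |g|$, so $\gcd(k,|g|)=1$; as $\rho_H$ is a homomorphism, $l\mid |g|$, whence $\gcd(k,l)=1$ as well. Thus (\ref{ProductCyclotomic}) yields $\prod_{i=0}^{t-1}\eta_k(\rho_H(g)\zeta_t^i)=\eta_k(\rho_H(g)^t)$, and substituting back through the two reductions above gives $\prod_{u\in K}\rho_H(\Bass{k}{m}{gu})=\eta_k(\rho_H(g)^t)^{mh}$.

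The computations are routine once the setup is right; the only point demanding care is the middle step — verifying that the fibres of $\rho_H|_K$ all have the same size $h$ and that the image is the full group of $t$-th roots of unity, so that the product over $u\in K$ really collapses to an $h$-th power of a product over a complete set of $t$-th roots of unity. The degenerate case $\rho_H(g)=1$ (i.e. $l=1$) is harmless, since $\eta_k(1)=1$ and this case is already subsumed in the derivation of (\ref{ProductCyclotomic}).
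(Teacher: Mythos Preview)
Your proof is correct and follows essentially the same route as the paper: apply (\ref{BassCyclotomic}), use that $\rho_H|_K$ has kernel $H\cap K$ so its image is exactly the $t$-th roots of unity each with fibre of size $h$, and then invoke (\ref{ProductCyclotomic}). If anything, you are more careful than the paper in explicitly checking the coprimality hypothesis $\gcd(k,tl)=1$ needed for (\ref{ProductCyclotomic}).
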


\begin{proof}
As $H=\ker(\rho_H)$, if $u$ runs through the elements of $K$ then $\rho_H(u)$ runs through the $t$-th roots of unity and each $t$-th root of unity is obtained as $\rho_H(u)$ for precisely $h$ elements $u$ of $K$. Therefore
    $$\prod_{u\in K} \rho_H(\Bass{k}{m}{gu}) = \left( \prod_{u\in K}
    \eta_k(\rho_H(g)\rho_H(u)) \right)^{m} = \left(\prod_{i=0}^{t-1} \eta_k(\rho_H(g)\zeta_t^i) \right)^{mh}
     = \eta_k(\rho_H(g)^t)^{mh}$$
as desired. We have used (\ref{ProductCyclotomic}) in the last equality.
\end{proof}

Proposition~\ref{OneComponentBass} is a consequence of the following stronger lemma.

\begin{lemma}\label{BassProjectionsL}
Let $H\in \Hc$ with $d=[G:H]$ and let $k,j\in \N$ be such that $k$ is coprime to $d$. Set $\eta=\eta_k(\zeta_d^j)$ and let $B_k(G)$ be the subgroup of $\U(\Z G)$ generated by the Bass cyclic units of the form $\Bass{k}{m}{g}$ with $g\in G$ and $k^m \equiv 1 \mod |g|$. Then there is a positive integer $m$ and $b\in B_k(G)$ such that $\rho_H(b)=\eta^m$ and $\rho_K(b)=1$ for every $K\in \Hc\setminus \{H\}$.
\end{lemma}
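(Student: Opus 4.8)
The plan is to use Lemma~\ref{ProductBassCyclic} as a machine that manufactures elements of $B_k(G)$ with explicitly computable images in every simple component of $\Q G$, and then to cancel all components but the one indexed by $H$ by multiplying finitely many such elements (and their inverses) together, using the cyclotomic identity (\ref{ProductCyclotomic}) to keep the surviving component under control.

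For $g\in G$ and a subgroup $K\le G$, set $c(g,K)=\prod_{u\in K}\Bass{k}{m}{gu}$; by Lemma~\ref{ProductBassCyclic} this lies in $B_k(G)$ and satisfies, for every $H'\in\Hc$,
\[
\rho_{H'}(c(g,K))=\eta_k(\rho_{H'}(g)^{[K:H'\cap K]})^{m|H'\cap K|}.
\]
(The coprimality hypotheses of Lemma~\ref{ProductBassCyclic} involve only orders of elements of $G$ and are arranged by choosing $m$ and the $K$'s suitably.) The key remark is that $\rho_{H'}(g)^{[K:H'\cap K]}=1$ whenever $g\in H'K$: in the cyclic group $G/H'$ the order of $\rho_{H'}(g)$ equals $[\langle g\rangle H':H']$, which divides $[H'K:H']=[K:H'\cap K]$ exactly when $\langle g\rangle H'\subseteq H'K$. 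Hence $c(g,K)$ is trivial in every component $\rho_{H'}$ with $g\in H'K$.

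Now fix $g_0\in G$ whose class generates $G/H$, so that $\rho_H(g_0)$ is a primitive $d$-th root of unity, which we call $\zeta_d$, and begin from $c(g_0,H)$. By the remark, $\rho_{H'}(c(g_0,H))=1$ for every $H'\in\Hc$ with $H'H=G$, while $\rho_H(c(g_0,H))=\eta_k(\zeta_d)^{m|H|}$, which is already a power of $\eta$ when $\gcd(j,d)=1$; the general $j$ reduces to this, since $\zeta_d^j$ is a primitive $(d/\gcd(d,j))$-th root of unity and one simply replaces $H$ by the overgroup $H''\supseteq H$ with $[G:H'']=d/\gcd(d,j)$. The components of $c(g_0,H)$ still to be killed are thus those at the $H'$ with $H'H\ne G$, i.e.\ at the subgroups contained in one of the finitely many maximal subgroups $M$ with $H\subseteq M\subsetneq G$ (one for each prime dividing $d$); these form a strictly smaller part of $\Hc$, so one finishes by induction on $|G|$ (equivalently on $d$): one constructs a correcting element of $B_k(G)$, obtained the same way inside the $M$'s, that is trivial on $\rho_H$ and on every already-trivial $\rho_{H'}$ but inverts the surviving components of $c(g_0,H)$, and the identity (\ref{ProductCyclotomic}) is what lets such corrections be assembled from $\eta_k$'s so that, after telescoping, the $H$-component remains an exact power of $\eta_k(\zeta_d^j)$ rather than a product of cyclotomic units attached to proper divisors of $d$. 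The desired $b$ is the resulting product and $m$ the accumulated exponent.

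I expect the real obstacle to be this last step: arranging the correcting elements so that, simultaneously, they preserve $\rho_H$ and all the components already made trivial while annihilating the rest, handling all primes dividing $d$ at once, and with enough control over exponents that one genuinely lands on a power of the single cyclotomic unit $\eta$. The other ingredients — the formula for $\rho_{H'}(c(g,K))$, the reduction from general $j$, and the bookkeeping of $m$ — are routine given Lemma~\ref{ProductBassCyclic} and the identities (\ref{BassCyclotomic})--(\ref{ProductCyclotomic}).
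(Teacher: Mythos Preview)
Your core idea---form $b=\prod_{h\in H}\Bass{k}{m}{g_0 h}$, note via Lemma~\ref{ProductBassCyclic} that $\rho_{H'}(b)=1$ whenever $H'H=G$, and correct the remaining components inductively---is exactly the paper's Case~2. Two steps, however, do not go through as written.

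First, the reduction ``replace $H$ by the overgroup $H''$'' for non-coprime $j$ is wrong: solving the lemma for $H''$ yields $\rho_{H''}(b)=\eta^{m}$ but $\rho_H(b)=1$, the opposite of what you need. The paper treats non-coprime $j$ (its Case~1) by passing to a proper \emph{subgroup} $S\supseteq H$ of prime index $p$, applying induction on $|G|$ inside $S$, and then observing that the resulting element of $B_k(S)$ can be nontrivial at only one extra $K\in\Hc(G)$ beyond $H$ (the unique $H_1$ with $H\subsetneq H_1$ and $[H_1:H]=p$), which is removed by a second induction on $d$ since $[G:H_1]<d$.

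Second---and this is the ``real obstacle'' you correctly flag---a single induction ``on $|G|$ (equivalently on $d$)'' does not suffice; the two are not equivalent, and the paper genuinely needs both, interleaved. Working inside a maximal $M$ controls projections only along $\Hc(M)$, and distinct $K\in\Hc(G)$ can share the same $K\cap M$, so a correction built in $M$ is automatically nontrivial at several $G$-components simultaneously. What makes the process terminate is the inner induction on $d$: every spurious component that appears sits at some $K$ with $[G:K]<d$. In the paper's Case~2 the key verification is that each surviving component at $K\ne H$ either has $d_K=[G:K]<d$ (so the inner induction applies directly) or has $t_Kd'_K$ sharing a prime with $d_K$ (so Case~1 applies, itself invoking both inductions); establishing this dichotomy is the substantive final paragraph of the paper's proof and is precisely the missing ingredient in your sketch.
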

\begin{proof}
Without loss of generality, we may assume that $k$ is coprime to $n=|G|$. Indeed, by an easy Chinese Remainder argument there is an integer $k'$ coprime to $n$ such that $k\equiv k'\mod d$. Then clearly $\eta_k(\zeta_d^j)=\eta_{k'}(\zeta_d^j)$.

We argue by a double induction, first on $n$ and second on $d$. The cases $n=1$ and $d=1$ are trivial. We denote by $P(G,H)$ the statement of the lemma for a finite abelian group $G$ and an $H\in \Hc(G)$. Hence the induction hypothesis includes the following statements:

\begin{itemize}
\item[(IH1):] $P(M,Y)$ holds for every proper subgroup $M$ of $G$ and any $Y\in \Hc(M)$.
\item[(IH2):] $P(G,H_{1})$ holds for every $H_{1}\in \Hc(G)$ with $[G:H_{1}]<[G:H]=d$.
\end{itemize}

We consider two cases, depending on whether $j$ is coprime to $d$ or not.

\textsc{Case 1}: $j$ is not coprime to $d$. Let $p$ be a common prime divisor of $d$ and $j$. Then $H$ is contained in a subgroup $S$ of $G$ with $[G:S]=p$ and $\zeta_{d}^{j}=\zeta_{d'}^{j/p}$ with $d'=[S:H]$. For every $K\in \Hc(G)$, let $\lambda_K$ denote the restriction of $\rho_K$ to $\Q S$. Clearly $\lambda_K$ is the $\Q$-linear extension of a linear representation of $S$ with kernel $S\cap K$. Since $S/(S\cap K)\cong KS/K$ and $KS/K$ is a subgroup of $G/K$ we deduce that $S/(S\cap K)$ is cyclic. Thus $K\rightarrow S\cap K$ defines a map $\Hc(G)\rightarrow \Hc(S)$. This map is  surjective, but maybe not injective. Indeed, let $K_1\in \Hc(S)$. If $K_1\in \Hc(G)$ then clearly the map associates $K_1$ with $K_1$. Otherwise $p$ divides $[G:K_1]$ and $S/K_1$ is a cyclic subgroup of $G/K_1$ of maximal order. This implies that $G/K_1=S/K_1\times L/K_1$ for some subgroup $L$ of $G$ containing $K_1$ and so that $[L:K_1]=p$. Then $G/L\cong S/K_1$, so that $L\in \Hc(G)$, and $L\cap S=K_1$.

Therefore $\Hc(S)=\{K\cap S  :  K\in \Hc(G)\}$. For every $Y\in \Hc(S)$ we choose a $K_Y\in \Hc(G)$ such that $K_Y\cap S=Y$ in such a way that $K_Y=Y$ if $Y\in \Hc(G)$. Then
$$\prod_{Y\in \Hc(S)} \lambda_{K_Y} : \Q S \rightarrow \prod_{Y\in \Hc(S)} \Q (\zeta_{[S:Y]})$$
is an isomorphism of algebras. By the first induction  hypothesis (IH1) there is $b\in B_k(S)$ such that $\lambda_H(b)=\eta^m$ for some positive integer $m$ and $\rho_K(b)=\lambda_K(b)=1$ if $K\in \Hc(G)$ with $K\cap S\ne H$. If $K\in \Hc(G)$ satisfies $K\cap S=H$ then either $K=H$ or $K=H_1$, where $H_1/H$ is the only subgroup of $G/H$ of order $p$, since $G/H$ is cyclic. Moreover $\rho_{H_1}(b)$ is a product of cyclotomic units, by (\ref{BassCyclotomic}). By the second induction hypothesis (IH2) there is $c\in B_k(G)$ such that $\rho_K(c)=1$ for every $K \in \Hc(G)\setminus \{H_1\}$ and $\rho_{H_1}(c)=\rho_{H_1}(b)^{m_1}$ for some positive integer $m_1$. Therefore $\rho_H(b^{m_1}c\inv)=\eta^{mm_1}$ and $\rho_K(b^{m_1}c\inv)=1$ for every $K\in \Hc(G)\setminus \{H\}$. This finishes the proof for this case.

\textsc{Case 2}: $j$ is coprime to $d$. Then $G=\GEN{a,H}$ and $\rho_H(a)=\zeta_d^j$ for some $a\in G$.  As $k$ is coprime to $n$, there is a positive integer $m$ such that $k^m\equiv 1 \mod |au|$ for every $u\in H$.
Hence
    $$\eta^m = \rho_H(\Bass{k}{m}{a}),$$
by (\ref{BassCyclotomic}). Let $$b=\prod_{h\in H} \Bass{k}{m}{ah}.$$ For every $K\in \Hc(G)$, set $$d_K=[G:K],\; d'_K=[G:\GEN{a,K}],\; h_K=|H\cap K| \mbox{ and } t_K=[H:H\cap K].$$ Then, by Lemma \ref{ProductBassCyclic},
    $$\rho_K(b) = \eta_k(\rho_K(a)^{t_K})^{m\, h_K} = \eta_k(\zeta_{d_K}^{t_Kd'_Ku_K})^{m\, h_K},$$
for some integer $u_K$ coprime to $d_K$. If $t_Kd'_K$ is not coprime to $d_K$ then, by Case 1, there is $b_K\in B_k(G)$ such that $\rho_K(b_K)=\rho_K(b)^{m_K}$ for some integer $m_K$ and $\rho_{K_{1}}(b_{K})=1$ for $K_{1}\in \Hc(G)\setminus \{ K \}$. By (IH2), the same holds if $d_K<d$. Let
    $$\Hc'=\{K\in \Hc(G): t_Kd'_K \text{ is not coprime to }d_K \text{ or } d_K<d\}.$$
For each $K\in \Hc'$ fix $b_K\in B_k(G)$ and $m_K\in \Z$ as above and $m_1=\lcm(m_K:K\in \Hc')$ and
    $$b_1=\left(\prod_{K\in \Hc'}b_K^{-\frac{m_1}{m_K}}\right)b^{m_1}.$$
Then $b_1\in B_k(G)$, $\rho_K(b_1)=1$ if $K\in \Hc'$ and $\rho_K(b_1)\in \GEN{\eta_k(\rho_K(a)^{t_K})}$ if $K\in \Hc(G)\setminus \Hc'$. Observe that $t_H=d'_H=u_H=1$ and hence $H\not\in \Hc'$. Therefore, $\rho_H(b_1)\in \GEN{\eta}$, because $\eta_k(\rho_H(a)^{t_H})=\eta$. To finish the proof we prove that $\Hc'=\Hc(G)\setminus \{H\}$. Suppose the contrary, that is, assume $K\in \Hc(G)\setminus\{H\}$ with $d_K\ge d$ and $\gcd(t_Kd'_K,d_K)=1$. The latter implies that $d'_K=1$, or equivalently $G=\GEN{a,K}$, and $t_K=[KH:K]$ is coprime to $d_K=[G:K]$. Consequently, $t_K=1$, or equivalently $H\subseteq K$. Hence, the assumption $d_K=[G:K]\ge [G:H]=d$ implies that $H=K$, a contradiction.
\end{proof}

As it was mentioned in the introduction, the Bass-Milnor Theorem is equivalent to Proposition~\ref{OneComponentBass} and the well known fact that the cyclotomic units generate a subgroup of finite index of $\U(\Z[\zeta])$ for every root of unity $\zeta$. We include a proof for completeness.

\begin{theorem}[Bass-Milnor]\label{BassAbelianFiniteIndexT}
If $G$ is a finite abelian group then the group generated by the Bass units of $\Z G$ has finite index in $\U(\Z G)$.
\end{theorem}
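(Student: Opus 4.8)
The plan is to combine Proposition~\ref{OneComponentBass} with the classical theorem of Kummer and the elementary remark on finite-index subgroups recalled in the introduction; since essentially all of the substance has been isolated in Proposition~\ref{OneComponentBass} (equivalently Lemma~\ref{BassProjectionsL}), the theorem becomes a short formal deduction. Concretely, I would fix the Perlis--Walker identification $\Q G = \prod_{i=1}^k \Q(\zeta_{n_i})$, so that $\O = \prod_{i=1}^k \Z[\zeta_{n_i}]$ is the unique maximal order of $\Q G$ and $\U(\O) = \prod_{i=1}^k \U(\Z[\zeta_{n_i}])$. Put $\Gamma = \U(\Z G)$, $\Lambda = \U(\O)$ and let $Y$ be the set of Bass units of $\Z G$, so $B(G) = \GEN{Y} \subseteq \Gamma$. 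Since $\Z G$ is an order in $\Q G$ contained in the maximal order $\O$, the group $\Gamma$ has finite index in $\Lambda$ (\cite[Lemma~4.6]{Sehgal1993}), and both are finitely generated abelian groups.

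Next I would record the elementary fact already used in the introduction: if $\Gamma$ is a subgroup of finite index in a finitely generated abelian group $\Lambda$, if $X \subseteq \Lambda$ generates a subgroup of finite index in $\Lambda$, and if $Y \subseteq \Gamma$ is such that every $x \in X$ has a positive power $x^{m_x} \in \GEN{Y}$, then $\GEN{Y}$ has finite index in $\Gamma$. For the proof one picks finitely many $x_1,\dots,x_s \in X$ generating a finite-index subgroup of $\Lambda$ (possible since that subgroup is again finitely generated), takes $m$ a common multiple of $m_{x_1},\dots,m_{x_s}$, and observes that $\GEN{x_1^m,\dots,x_s^m}$ contains the $m$-th power of every element of $\GEN{x_1,\dots,x_s}$, hence has finite index in $\GEN{x_1,\dots,x_s}$ and therefore in $\Lambda$; as this subgroup lies in $\GEN{Y}$, the latter has finite index in $\Lambda$ and a fortiori in $\Gamma$.

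Finally I would apply this with $X$ the set of cyclotomic units of $\Q G$. On the one hand, $\GEN{X}$ has finite index in $\Lambda$: by Kummer's theorem the cyclotomic units of $\Q(\zeta_{n_i})$ generate a finite-index subgroup of $\U(\Z[\zeta_{n_i}])$, and among the cyclotomic units of $\Q G$ are all those that equal a cyclotomic unit in a single coordinate and $1$ in the others, so $\GEN{X}$ contains a direct product of finite-index subgroups of the factors of $\Lambda = \prod_i \U(\Z[\zeta_{n_i}])$. On the other hand, by Proposition~\ref{OneComponentBass} every cyclotomic unit $x$ of $\Q G$ has a positive power $x^m$ that is a product of Bass units, i.e. $x^m \in B(G) = \GEN{Y}$. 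The elementary fact then gives that $B(G)$ has finite index in $\U(\Z G)$, which is the assertion. I do not expect any real obstacle in this argument: the genuine difficulty is entirely contained in the already-proved Lemma~\ref{BassProjectionsL}, and the only points that need a line of care — the finite index of $\U(\Z G)$ in $\U(\O)$ and the fact that the product of the Kummer subgroups has finite index in $\U(\O)$ — are standard.
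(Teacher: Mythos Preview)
Your proposal is correct and follows essentially the same route as the paper: reduce to the maximal order $\O=\prod_i \Z[\zeta_{n_i}]$, use Kummer's theorem to see that the cyclotomic units generate a finite-index subgroup of $\U(\O)$, and then invoke Proposition~\ref{OneComponentBass}/Lemma~\ref{BassProjectionsL} to conclude via the elementary finite-index criterion you spell out. The paper's version is more terse (it jumps directly from Lemma~\ref{BassProjectionsL} to ``$f(B)$ has finite index'' without isolating the elementary fact), but the logic is identical.
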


\begin{proof}
Let $\Hc=\Hc(G)$, $f=\prod_{H\in \Hc} \rho_H:\Q G\rightarrow \prod_{H\in \Hc} \Q(\zeta_{[G:H]})$, $B=B(G)$ and $V$ be the subgroup of $\prod_{H\in \Hc} \U(\Z(\zeta_{[G:H]}))$ generated by the units that project on one component to a cyclotomic unit and project on all other components to 1. As the cyclotomic units of each ring of cyclotomic integers $\Z[\zeta_n]$ generate a subgroup of finite index in $\U(\Z[\zeta_n])$, $V$ has finite index in $\prod_{H\in \Hc} \U(\Z(\zeta_{[G:H]}))$. Hence, by Lemma \ref{BassProjectionsL}, $f(B)$ has finite index in $\prod_{H\in \Hc} \U(\Z(\zeta_{[G:H]}))$ and therefore $B$ has finite index in $\U(\Z G)$, since $B \subseteq \U(\Z G) \subseteq f\inv(\prod_{H\in \Hc} \U(\Z(\zeta_{[G:H]})))$.
\end{proof}

We now present Algorithm \texttt{CyclotomicAsProductOfBass} which, for a cyclotomic unit $\eta$ of $\Q G$ as input, returns  a power of $\eta$ as product of Bass units. More precisely, the input is formed by a finite abelian group $G$, a subgroup $H\in \Hc(G)$, a list of linear characters $(\rho_K:K\in \Hc(G))$ of $G$ with $\ker \rho_K=K$ and two integers $k$ and $j$ with $\gcd(k,[G:H])=1$. The output is formed by an integer $m$, an integer $k'$ coprime to $|G|$ such that $k'\equiv k\mod [G:H]$ , and a list $((g_i,m_i,p_i):i\in I)$ with $g_i\in G$ and $m_i$ and $p_i$ integers such that $\gcd(k',|g_i|)=1$, $k'^{m_i}\equiv 1 \mod |g_i|$ and $b=\prod_{i\in I} \Bass{k'}{m_i}{g_i}^{p_i}$ is so that $\rho_H(b)=\eta_k(\zeta_{[G:H]}^j)^m$ and $\rho_K(b)=1$ for every $K\in \Hc(G)\setminus \{H\}$.

Observe that each linear character $\rho$ with kernel $K$ is determined by $\rho(a)$, where $a$ is an element of $G$ such that $G=\GEN{a,K}$. Moreover, for such an $a$ there is a unique linear character $\rho$ with kernel $K$ such that $\rho(a)=\zeta_{[G:K]}$. Thus we describe the list of characters $\rho_K$ by a list $A=(a_K:K\in \Hc(G))$ of elements of $G$ such that $G=\GEN{a_K,K}$ for every $K\in \Hc(G)$. In other words, in the input of \texttt{CyclotomicAsProductOfBass} we replace the list of characters by the list $A$. Then $\rho_K$ is the unique linear character of $G$ with kernel $K$ such that $\rho_K(a_K)=\zeta_{[G:K]}$.

\texttt{CyclotomicAsProductOfBass} follows the structure of the proof of Lemma~\ref{BassProjectionsL}. Notice that this is an inductive proof and henceforth the structure of the Algorithm is recursive. That is, the algorithm includes calls to itself. In these calls to itself the first argument, the ambient group, is replaced by a subgroup $S$ of $G$. In principle the algorithm could start by calculating $\Hc(G)$. However, this would make the algorithm inefficient because each call to itself should calculate $\Hc(S)$, which is not necessary because
    \begin{equation}\label{HcSHcG}
    \Hc(S)=\{S\cap K:K\in \Hc(G)\},
    \end{equation}
as was observed in the proof of Lemma~\ref{BassProjectionsL}. Thus we include $\Hc(G)$ as part of the input of the algorithm and, before each recursive call of the algorithm for a subgroup $S$ of $G$, one calculates $\Hc(S)$ from $\Hc(G)$ using (\ref{HcSHcG}). Of course one should filter the list to eliminate repetitions in $\Hc(S)$.

Furthermore, from the list $\{a_K:K\in \Hc(G)\}$, satisfying $G=\GEN{a_K,K}$ for each $K\in \Hc(G)$, one can calculate another list $\{b_Y:Y\in \Hc(S)\}$, satisfying $S=\GEN{b_Y,Y}$ for each $Y\in \Hc(S)$. In fact, the proof of Lemma~\ref{BassProjectionsL} tell us that for each $Y\in \Hc(S)$ one can select a $K_Y\in \Hc(G)$ such that $Y=S\cap K_Y$, so that if $Y\in \Hc(G)$ then $K_Y=Y$ and the linear representation $\lambda_Y$ of $S$ with kernel $Y$ chosen is the restriction of $\rho_{K_Y}$ to $S$.
Thus, we need to take $b_Y$ such that $\rho_K(b_Y)=\zeta_{[S:Y]}$.
In fact in each recursive call of the algorithm either $S=G$ or $[G:S]$ is prime.
If $S=G$, then $K_Y=Y$ and we may take $b_Y=a_Y$ for each $Y$. Assume otherwise that $[G:S]=p$, a prime integer.
If $Y\in \Hc(G)$ then $Y=K_Y$, $[S:Y]=\frac{[G:K_Y]}{p}$ and $\rho_{K_Y}(a_Y^p)=\zeta_{[G:K_Y]}^p = \zeta_{[S:Y]}$, so we may take $b_Y=a_Y^p$. Otherwise, i.e. if $Y\not\in \Hc(G)$, then $[K_Y:Y]=p$ and $G/Y=S/Y\times K_Y/Y$.
In particular, $p$ divides $[G:Y]$ and $[S:Y]=[G:K_Y]$. Then $b_Y$ is an element of $a_{K_Y}K_Y\cap S$.

We use the following notation for $m,n\in \Z$ and $A$, $B$ and $A_l$ ($l\in L$) lists.
\begin{eqnarray*}
m \mod n &=& \text{Remainder of }m \text{ modulo }n; \\
O_n(m) &=& \text{Multiplicative order of } m \text{ modulo }n \text{ (if } \gcd(m,n)=1);\\
(\_) &=& \text{Empty list};\\
A\sqcup B &=& \text{ Concatenation of the lists } A \text{ and } B; \\
\bigsqcup_{i=1}^l A_i &=& \text{ Concatenation of the lists } A_1,\dots,A_l.
\end{eqnarray*}

{\small
\begin{algorithm}[h!]
 \caption{\texttt{CyclotomicAsProductOfBass}($G,\Hc(G),A,H,k,j$)} \label{alg1}
\begin{algorithmic}[1]
 \Require $G$ a finite abelian group,

 $\Hc(G)=(H\leq G  :  G/H \mbox{ cyclic})$,

 $A=(a_K :  K\in\Hc(G))$, a list of group elements $a_K$ of $G$ such that $G=\GEN{a_K,K}$,

 $H\in \Hc(G)$,

 $j,k\in \N$ with $\gcd(k,[G:H])=1$.
\Comment{$\rho_K$ linear representation of $G$: $\ker \rho_K = K$; $\rho_K(a_K)=\zeta_{[G:K]}$.}

 \Ensure $(m, k', B=((g_i,m_i,q_i)  :  i=1,\dots,r)) \in \Z^2 \times (G\times \Z^{2})^{r}$.

 \Comment{
 $\rho_K\left(\prod_{i=1}^{r}\Bass{k'}{m_i}{g_i}^{q_i}\right)=
 \left\{\begin{array}{ll} \eta_k(\zeta_{[G:H]}^j)^m, & \text{if } K=H;\\ 1, &\text{if } K\in \Hc(G)\setminus \{H\}.
 \end{array}\right.$}

\State $d := [G:H]$; 
\State $k':=$ an integer coprime to $|G|$ congruent to $k$ modulo $d$.
    \Comment{Chinese Remainder Theorem.}
\State $j:=(j \mod d);$

\If {$d=1$}
    \State $m:=1;\;B=(\_)$;

\ElsIf {$\gcd(j,d)\neq 1$}
        \State $p :=$ common prime divisor of $j$ and $d$;
        \State $S:=\GEN{a_H^p,H}$ \Comment{Unique subgroup of $G$ of index $p$ containing $H$.}

        \State $\Hc(S) := (\_); \; A_S=(\_)$;
        \For {$K \in \Hc(G)$ with $K\subseteq S$}
            \State Add $K$ to $\Hc(S)$;
            \State Add $a_{K}^p$ to $A_S$;
        \EndFor
        \For {$K\in \Hc(G)$ with $K\not\subseteq S$}
            \State $Y:=K\cap S$;
            \If {$Y \not\in \Hc(S)$} \Comment{To exclude redundancy}
                \State Add $Y$ to $\Hc(S)$;
                \State Add an element of $S\cap a_K K$, to $A_S$;
            \Comment{$Y=S \cap K=\ker \rho_K|_Y, \rho_K(b_Y)=\zeta_{[S:Y]}$.}
            \EndIf
        \EndFor

        \State $(m,k',B=((s_i,m_i,q_i):i\in L)):=\texttt{CyclotomicAsProductOfBass}(S,\Hc(S),A_S,H,k',j/p)$;

        \State $H_1:=\GEN{a_{H}^{d/p},H}$; \Comment{Only subgroup of $G$ containing $H$ with $[H_1:H]=p$}
        \For{$i\in L$}
            \State Determine $j_i$ such that $s_i\in a_{H_1}^{j_i}H_1$;
            \State $(n_i,k',C_i=((g_u,m_u,r_u) :  u\in L_i)) := \texttt{CyclotomicAsProductOfBass}(G,\Hc(G),A,H_1,k',j_i)$

        \EndFor

        \State $m'$ := $\lcm(n_i:i\in L)$;
        \State $B := ((s_i,m_i,q_im'):i\in L) \sqcup \bigsqcup_{i\in L} \left(\left(g_u,m_u,-\frac{r_uq_im_im'}{n_i}\right): u\in L_i\right)$;
        \State $m$ := $mm'$;

    \Else \Comment{if $\gcd(j,d)=1$}
        \State $m := \lcm (O_{|a_H^jh|}(k')  :  h\in H)$;
        \State $B := ((a_H^jh,m,1): h\in H)$; 
        \For{$K\in \Hc(G)\setminus\{H\}$}
            \State $d_K := [G:K]; \; h_K := |H\cap K|; \; t_K := [H:H\cap K]$;
            \State $v_K := \text{ integer such that } a_HK=a_K^{v_K}K$; 
            \Comment{$\rho_K\left(\prod_{h\in H} \Bass{k'}{m}{a_{H}^jh}\right) = \eta_k(\zeta_{d_K}^{t_Kv_K})^{mh_{H}}$.}
            \State \begin{tabular}{rcl}
            $B_K$&$=$&$(m_K,k',((g_{i_K},m_{i_K},r_{i_K}) :  i_K \in I_K))$ \\ &:=& \texttt{CyclotomicAsProductOfBass}($G,\Hc(G),A,K,k',t_Kv_K$);
            \end{tabular}

        \EndFor

        \State $n$ := $\lcm(m_K:K\in \Hc(G)\setminus\{H\})$;
        \State $B:= ((a_H^jh,m,n) :  h\in H) \sqcup \bigsqcup_{K\in \Hc(G)\setminus \{H\}} \left(\left(g_{i_K},m_{i_K},-\frac{r_{i_K}mh_Kn}{m_K}\right): i_K \in I_K\right)$;
        \State $m$ := $mn|H|$;

\EndIf

    \Return $(m,k',B)$;

\end{algorithmic}
\end{algorithm}}\clearpage

This algorithm is suitable for implementation in an appropriate programming language such as GAP \cite{GAP4} and could be added to existing GAP packages such as Wedderga \cite{Wedderga}, which also deals with computations in group rings.

\section{A basis of Bass units}\label{SectionBasis}

Bass proved that if $G=\GEN{g}$, a cyclic group of order $n$, and $m$ is a multiple of $\varphi(n)$ then $\left\{\Bass{k}{m}{g^d}:d\mid n,1<k< \frac{n}{2d}, (k,\frac{n}{d})=1\right\}$ is a basis of a free abelian subgroup of finite index in $\U(\Z G)$ \cite{bass1966}. (Here $\varphi$ stands for the Euler totient function.) In this section we generalize this result and obtain a basis of Bass units for a subgroup of finite index in the integral group ring of an arbitrary abelian group $G$.
Moreover, the proof provides, for an arbitrary Bass unit $b$, an algorithm to express a power of $b$ as a product of a trivial unit and powers of at most two units in this basis of Bass units.

\begin{theorem}\label{BasisBassT}
Let $G$ be a finite abelian group. For every cyclic subgroup $C$ of $G$ choose a generator $a_C$ of $C$ and for every $k$ coprime to the order of $C$ choose an integer $m_{k,C}$ with $k^{m_{k,C}}\equiv 1 \mod |C|$. Then
    $$\left\{\Bass{k}{m_{k,C}}{a_C}:C \text{ cyclic subgroup of } G, 1<k< \frac{|C|}{2}, \gcd(k,|C|)=1\right\}$$
is a basis for a free abelian subgroup of finite index in $\U(\Z G)$.
\end{theorem}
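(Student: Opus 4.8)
The plan is to verify two things: that the indicated set generates a subgroup of finite index in $\U(\Z G)$, and that it is multiplicatively independent (hence a basis, since a finite-index subgroup of $\U(\Z G)$ has the same rank as $\U(\Z G)$, and a generating set of the right size for a finite-index subgroup of a finitely generated abelian group is automatically independent). For the finite-index claim, by Theorem~\ref{BassAbelianFiniteIndexT} it suffices to show that $B(G)$, the group generated by \emph{all} Bass units, has finite index over the group $B_0$ generated by the restricted list in the statement; equivalently, for an arbitrary Bass unit $\Bass{k}{m}{g}$ one must show some power lies in $B_0$. So the first main step is to reduce an arbitrary Bass unit to one built from the chosen data $(a_C, m_{k,C})$ and with $1<k<|C|/2$.

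The reductions are routine identities for Bass units, which I would assemble in order: first, $\Bass{k}{m}{g}$ depends on $k$ only modulo $|g|$, so we may take $0\le k<|g|$; the cases $k=0,1$ give trivial or unit contributions, and $k$ not coprime to $|g|$ cannot occur. Second, if $|C|=|g|=n$ and $a=a_C$ is the chosen generator, then $g=a^s$ for some $s$ coprime to $n$, and there is a standard identity expressing $\Bass{k}{m}{a^s}$ in terms of $\Bass{k'}{m'}{a}$ (conjugating the exponent by $s^{-1}$ modulo $n$), so we may assume $g=a_C$. Third, the exponent $m$ can be adjusted to the chosen $m_{k,C}$: if $k^{m}\equiv k^{m'}\equiv 1$ then $\Bass{k}{m}{a}$ and $\Bass{k}{m'}{a}$ differ by a power relation (both powers of a common Bass unit, up to a central/trivial term), using $\Bass{k}{m+m'}{a}=\Bass{k}{m}{a}\Bass{k}{m'}{a^{k^m}}$-type identities; this costs at most passing to the $\varphi(n)$-th power. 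Finally, to force $1<k<|C|/2$ one uses the relation $\Bass{k}{m}{a}\,\Bass{n-k}{m}{a}$ being trivial (coming from $\eta_k(\zeta)\eta_{-k}(\zeta)$ up to a root of unity, as in equation~(\ref{1-Xn})), so $\Bass{n-k}{m}{a}$ is, up to a trivial unit and inversion, the same as $\Bass{k}{m}{a}$; this replaces $k$ by $n-k<n/2$. Carrying these out shows every Bass unit has a power in $B_0$, so $B_0$ has finite index.

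For multiplicative independence I would argue componentwise through $f=\prod_{H\in\Hc}\rho_H$. Under $\rho_H$ with $d=[G:H]$, the unit $\Bass{k}{m_{k,C}}{a_C}$ maps to $\eta_k(\rho_H(a_C))^{m_{k,C}}$, and $\rho_H(a_C)$ is a primitive $\ell$-th root of unity where $\ell=|C|/\gcd(|C|,\ldots)$ divides both $d$ and $|C|$. Counting: the rank of $\U(\Z G)$ is, by Higman--Ayoub--Ayoub, $\sum_{d}k_d(\varphi(d)/2 - 1)+1$ roughly (the number of cyclic subgroups enters), and one checks that the cardinality of the proposed set is exactly $\sum_{C}(\varphi(|C|)/2-\text{(correction for small }|C|))$, matching $\mathrm{rank}\,\U(\Z G)$ — this is the bookkeeping that makes ``generating + right cardinality $\Rightarrow$ basis'' work, so strictly I only need the finite-index statement plus this rank count, not a direct independence proof.

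The step I expect to be the main obstacle is the exponent-and-generator normalization for Bass units: getting clean identities that rewrite $\Bass{k}{m}{a^s}$ and $\Bass{k}{m}{a^d}$ (for $d\mid n$) as products of the basis Bass units \emph{with explicit exponents}, since the theorem claims this can be done using powers of at most two basis units plus a trivial unit. Pinning down exactly which two basis elements appear — presumably $\Bass{k}{\cdot}{a_C}$ and $\Bass{k'}{\cdot}{a_{C'}}$ for a related subgroup $C'$ — and checking the valuations match in every cyclotomic component is the delicate part; the finite-index half and the rank count are comparatively mechanical once the Bass-unit identities are in hand.
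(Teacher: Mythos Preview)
Your proposal is correct and follows essentially the same approach as the paper: reduce arbitrary Bass units to the listed ones via the standard identities (reduce $k$ modulo $|g|$, change generator via $\Bass{s}{m}{a}\Bass{k}{m}{a^s}=\Bass{ks}{m}{a}$, adjust the exponent $m$ using $\Bass{k}{m+m'}{g}=\Bass{k}{m}{g}\Bass{k}{m'}{g}$, and fold $k\mapsto |g|-k$ via $\Bass{|g|-k}{m}{g}=\Bass{k}{m}{g}g^{-km}$), then verify independence by counting that the set has exactly $\frac{1+k_2+|G|-2c}{2}$ elements, matching Higman's rank formula. Two small corrections: the generator change does not give a single $\Bass{k'}{m'}{a}$ but a quotient of two (as you yourself anticipate later), and the ``at most two basis units plus a trivial unit'' statement is not part of the theorem itself but of the subsequent corollary---for the theorem you only need that \emph{some} power lands in $B_0$, which your reductions already provide.
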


\begin{proof}
The proof is based on the following equalities (\cite[Lemma 3.1]{2006GoncalvesPassman}):
\begin{eqnarray}
 \label{Basseq1} \Bass{k}{m}{g}&=&\Bass{k_1}{m}{g}, \mbox{ if } k\equiv k_1 \mod |g|, \\
 \label{Basseq2} \Bass{k}{m}{g}\Bass{k}{m_1}{g}&=&\Bass{k}{m+m_1}{g},\\
 \label{Basseq3} \Bass{k}{m}{g}\Bass{k_1}{m}{g^k}&=&\Bass{kk_1}{m}{g},\\
 \label{Basseq4} \Bass{1}{m}{g}&=&1 \text{ and } \\
 \label{Basseq5} \Bass{|g|-1}{m}{g} &=& (-g)^{-m},
\end{eqnarray}
for $g\in G$ and $k^m\equiv k_1^m \equiv k^{m_1} \equiv 1 \mod |g|$. By (\ref{Basseq2}) we have
\begin{equation}\label{Basseq6}
\Bass{k}{m}{g}^{h} = \Bass{k}{mh}{g}
\end{equation}
and from
(\ref{Basseq1}), (\ref{Basseq3}) and (\ref{Basseq5}) we deduce
    \begin{equation}\label{Basseq7}
    \Bass{|g|-k}{m}{g} = \Bass{k(|g|-1)}{m}{g} = \Bass{k}{m}{g} \Bass{|g|-1}{m}{g^k} = \Bass{k}{m}{g}g^{-km}
    \end{equation}
provided $(-1)^m\equiv 1 \mod |g|$.

By Theorem~\ref{BassAbelianFiniteIndexT}, the Bass units generate a subgroup of finite index in $\U(\Z G)$. Let $t=\varphi(|G|)$.
We first prove that $B_1=\left\{\Bass{k}{t}{a_C}\mid 1<k<\frac{|C|}{2}, \gcd(k,|C|)=1\right\}$ generates a subgroup of finite index in $\U(\Z G)$.
To do so we ``sieve'' gradually the list of Bass units, keeping the property that the remaining Bass units still generate a subgroup of finite index in $\U(\Z G)$, until the remaining Bass units are the elements of $B_1$. By equation (\ref{Basseq1}), to generate $B(G)$ it is enough to use the Bass units of the form $\Bass{k}{m}{g}$ with $g\in G$, $1\le k < |g|$ and $k^m \equiv 1 \mod |g|$. By (\ref{Basseq6}), for every Bass unit $\Bass{k}{m}{g}$ we have $\Bass{k}{m}{g}^u = \Bass{k}{t}{g}^v$ for some positive integers $u$ and $v$. Thus the Bass units of the form $\Bass{k}{t}{g}$ with $1\le k < |g|$ and $\gcd(k,|g|)=1$ generate a subgroup of finite index in $\U(\Z G)$. By (\ref{Basseq2}) and (\ref{Basseq3}), we can reduce further the list of generators by taking only those with $g=a_C$ for some cyclic group $C$ of $G$. By (\ref{Basseq4}) and (\ref{Basseq5}) we can exclude the Bass units with $k=\pm 1$ and still generate a subgroup of finite index in $\U(\Z G)$ with the remaining elements. Finally, $\Bass{k}{t}{g}\inv \Bass{|g|-k}{t}{g}=\Bass{|g|-1}{t}{g^k}=g^{-kt}$, by (\ref{Basseq5}) and (\ref{Basseq7}).  Thus $\Bass{k}{t}{g}\inv \Bass{|g|-k}{t}{g}$ has finite order. Therefore the units $\Bass{k}{t}{g}$ with $k>\frac{|g|}{2}$ can be excluded. The remaining units are exactly the elements of $B_1$. Thus $\GEN{B_1}$ has finite index in $\U(\Z G)$, as desired.

Let $\mathcal{C}$ be the set of cyclic subgroups of $G$ and $B=\{\Bass{k}{m_{k,C}}{a_C} : C\in \mathcal{C}, 1<k< \frac{C}{2}, \gcd(k,|C|)=1\}$. Using (\ref{Basseq6}) once more, we deduce that $\GEN{B}$ has finite index in $\U(\Z G)$, since so does $\GEN{B_1}$. To finish the proof we need to prove that the elements of $B$ are multiplicatively independent. To do so, it is enough to show that the rank of $\U(\Z G)$ coincides with the cardinality of $B$. For this, first observe that the cardinality of $B$ is $\sum_d k_d t_d$, where $d$ runs through the divisors of $|G|$, $k_d$ is the number of cyclic subgroups of $G$ of order $d$ and $t_d$ is the cardinality of $\{k :  1<k< \frac{d}{2}, \gcd(d,k)=1\}$. Obviously $t_1=t_2=0$ and $t_d=\frac{\varphi(d)}{2}-1$ for every $d>2$. Therefore, $|B|=\sum_{d>2} \left(\frac{k_d\varphi(d)}{2}-k_d\right) = \frac{1+k_2+\sum_d h_d}{2}-\sum_d k_d = \frac{1+k_2+|G|-2c}{2}$, where $h_d$ denotes the number of elements of $G$ of order $d$ (so that $h_1=1$ and $h_2=k_2$) and $c$ is the number of cyclic subgroups of $G$. By a Theorem of Higman \cite{Higman1940}, this number coincides with the rank of $\U(\Z G)$ and the proof is finished.
\end{proof}

In Theorem~\ref{BasisBassT} one can choose, for example, $m_{k,C}=\varphi(|G|)$, $m_{k,C}=\varphi(|C|)$ or $m_{k,C} = O_{|C|}(k)$. Observe that the Bass Theorem is the specialization of Theorem~\ref{BasisBassT} to $G=\GEN{g}$, $a_{\GEN{g^d}}=g^d$ for $d$ dividing $|g|$, and $m_{k,\GEN{g^d}}$ a fixed multiple of $\varphi(|g|)$. On the other hand, using the Bass and the Bass-Milnor Theorem one can easily prove that the units of Theorem~\ref{BasisBassT} generate a subgroup of finite index in $\U(\Z G)$. Indeed, by the Bass-Milnor Theorem, the Bass  units generate a subgroup of finite index of $\U(\Z G)$. On the other hand, by the Bass Theorem, if $\GEN{g}=C=\GEN{a_C}$, then a power of $\Bass{k}{m}{g}$ belongs to the group generated by the units of the form $\Bass{l}{\varphi(|a_C^j|)}{a_C^j}$, with $1 < l < \frac{|a_C^j|}{2}$ and $\gcd(l,|a_C^j|)=1$. Thus another power belongs to the group generated by the units of the form $\Bass{l}{m_{l,C}}{a_C^j}$.

The advantage of the new proof, with respect to the proofs of Bass and Bass-Milnor, is that it provides a way to express some power of any given Bass unit $\Bass{k}{m}{g}$ as a product of a trivial unit and powers of at most 2 elements from
    $$B=\left\{\Bass{k}{m_{k,C}}{a_C}:1<k\le\frac{|C|}{2}, \gcd(k,|C|)=1, C\in \mathcal{C}\right\}$$
for any given choice of generators $a_C$ of cyclic subgroups and integers $m_{k,C}$ as in Theorem~\ref{BasisBassT}. This is obtained as follows: calculate

$\bullet$ $n:=|g|; \; C:=\GEN{g};$

$\bullet$ $k'_1:=$ the unique integer $0\le k'_1 < n$ such that $g=a_C^{k'_1};$

$\bullet$ $k'_0:= kk'_1 \mod n$;

$\bullet$ for $i=0,1: \;
    k_i := \min(k'_i,n-k'_i); \;
    h_i := \left\{\begin{array}{ll} 1, & \text{if } k_i=k'_i; \\ a_C^{k'_i}, & \text{otherwise}.

    \end{array}\right.$

$\bullet$ $M:=\lcm\left(m,m_{k_0,C},m_{k_1,C}\right); \; c := \frac{M}{m};$

Then, by (\ref{Basseq3}), (\ref{Basseq6}) and (\ref{Basseq7}) we have
    \begin{eqnarray*}
    \Bass{k}{m}{g}^c &=& \Bass{k}{M}{a_C^{k'_1}} = \Bass{k'_0}{M}{a_C} \Bass{k'_1}{M}{a_C}^{-1} = \Bass{k_0}{M}{a_C} h_0^M \Bass{k_1}{M}{a_C}^{-1} h_1^{-M} \\
    &= & (h_0h_1\inv)^M \Bass{k_0}{m_{k_0,C}}{a_C}^{\frac{M}{m_{k_0,C}}} \Bass{k_1}{m_{k_1,C}}{a_C}^{-\frac{M}{m_{k_1,C}}}.
    \end{eqnarray*}

We summarize this result in the following corollary.
\begin{corollary}\label{BassInBasis}
Let $G$ be a finite abelian group. For every cyclic subgroup $C$ of $G$ choose a generator $a_C$ of $C$ and for every $k$ coprime to the order of $C$ choose an integer $m_{k,C}$ with $k^{m_{k,C}}\equiv 1 \mod |C|$. Then
    $$\left\{\Bass{k}{m_{k,C}}{a_C}:C \text{ cyclic subgroup of } G, 1<k< \frac{|C|}{2}, \gcd(k,|C|)=1\right\}$$
is a basis for a free abelian subgroup of finite index in $\U(\Z G)$. Moreover, for any Bass unit $\Bass{k}{m}{g}$ in $\Z G$ we have
    $$\Bass{k}{m}{g}^{c}=h\; \Bass{k_0}{m_{k_0,C}}{a_C}^{n_0}\; \Bass{k_1}{m_{k_1,C}}{a_C}^{n_1},$$
for $C=\GEN{g}$, an element $h\in G$ and integers $c,n_0,n_1,k_0,k_1$ so that $1\le k_0,k_1 \le \frac{|C|}{2}$, $g=a_C^{\pm k_1}$ and $k_0\equiv \pm kk_1 \mod |C|$.
\end{corollary}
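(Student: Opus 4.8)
The first assertion is precisely Theorem~\ref{BasisBassT}, so the only thing left is the ``moreover'' part, and the plan is to obtain it by an explicit manipulation using the identities $(\ref{Basseq1})$--$(\ref{Basseq7})$ established in the proof of Theorem~\ref{BasisBassT}. Given a Bass unit $\Bass{k}{m}{g}$, I would set $C=\GEN{g}$, $n=|C|$, let $k_1'$ be the unique integer with $0\le k_1'<n$ and $g=a_C^{k_1'}$ (so that $\gcd(k_1',n)=1$, because $g$ generates $C$), put $k_0':=k k_1' \bmod n$, let $k_i:=\min(k_i',n-k_i')$ for $i=0,1$, and choose $M$ to be a common multiple of $m$, $m_{k_0,C}$ and $m_{k_1,C}$; finally $c:=M/m$, $n_0:=M/m_{k_0,C}$ and $n_1:=-M/m_{k_1,C}$.

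The computation then proceeds in four steps. First, $(\ref{Basseq6})$ gives $\Bass{k}{m}{g}^{c}=\Bass{k}{M}{g}=\Bass{k}{M}{a_C^{k_1'}}$. Second, $(\ref{Basseq3})$ applied with base point $a_C$ yields $\Bass{k_1'}{M}{a_C}\,\Bass{k}{M}{a_C^{k_1'}}=\Bass{k_1'k}{M}{a_C}=\Bass{k_0'}{M}{a_C}$ (the last equality by $(\ref{Basseq1})$), whence
\[
\Bass{k}{m}{g}^{c}=\Bass{k_0'}{M}{a_C}\,\Bass{k_1'}{M}{a_C}\inv .
\]
Third, for each $i$ with $k_i'>n/2$ I would use $(\ref{Basseq7})$, or $(\ref{Basseq5})$ when $k_i=1$, to rewrite $\Bass{k_i'}{M}{a_C}$ as $\Bass{k_i}{M}{a_C}$ times a power of $a_C$; collecting these powers gives an element $h\in C\subseteq G$. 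Fourth, $(\ref{Basseq6})$ again rewrites $\Bass{k_i}{M}{a_C}=\Bass{k_i}{m_{k_i,C}}{a_C}^{M/m_{k_i,C}}$. Putting everything together produces
\[
\Bass{k}{m}{g}^{c}=h\,\Bass{k_0}{m_{k_0,C}}{a_C}^{n_0}\,\Bass{k_1}{m_{k_1,C}}{a_C}^{n_1},
\]
and the relations $g=a_C^{\pm k_1}$ and $k_0\equiv\pm kk_1\pmod{n}$ are immediate from $k_1'\equiv\pm k_1$, $k_0'\equiv\pm k_0$ and $k_0'\equiv kk_1'\pmod{n}$.

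I do not expect a genuine obstacle here; the statement is essentially a repackaging of the sieving argument in the proof of Theorem~\ref{BasisBassT}. The one point that needs attention is that $(\ref{Basseq5})$ and $(\ref{Basseq7})$ (and, for the same reason, the instance of $(\ref{Basseq3})$ above, which requires $(k_1')^M\equiv 1 \pmod n$) force the condition $(-1)^M\equiv 1\pmod{n}$ in order for all the Bass units involved to lie in $\Z G$; this is automatic when $n\le 2$, and is otherwise arranged by replacing $M$ by $\lcm(M,2)$, which only multiplies $c,n_0,n_1$ by a factor $1$ or $2$ and so is harmless. One should also record the degenerate cases: if $k_i=1$ then $\Bass{1}{m_{k_i,C}}{a_C}=1$ by $(\ref{Basseq4})$, so that factor disappears, and if $k_0=k_1$ the two basis factors combine — in fact $k_0=k_1$ forces $k\equiv\pm1\pmod{n}$, in which case $\Bass{k}{m}{g}$ was already a trivial unit. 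In every case one lands on a trivial unit times powers of at most two of the listed Bass units, as claimed.
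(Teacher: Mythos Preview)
Your proposal is correct and follows exactly the computation the paper gives just before stating the corollary: the same definitions of $k_1',k_0',k_i,M,c,n_0,n_1$ and the same chain of identities (\ref{Basseq6}), (\ref{Basseq3})/(\ref{Basseq1}), (\ref{Basseq7}), (\ref{Basseq6}). Your remark that one may need to replace $M$ by $\lcm(M,2)$ so that the hypothesis $(-1)^M\equiv 1\pmod n$ of (\ref{Basseq7}) (and hence the well-definedness of $\Bass{k'_i}{M}{a_C}$) is met is in fact a small refinement over the paper, which leaves this point implicit.
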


Observe that the factor $\Bass{k_i}{m_{k_i,C}}{a_C}$ in Corollary~\ref{BassInBasis} is 1 if $k_i=1$. Otherwise it is a basis element. The integers $k_0$ and $k_1$ are uniquely determined by the conditions imposed, namely $1\le k_0,k_1 \le \frac{|C|}{2}$, $g=a_C^{\pm k_1}$ and $k_0\equiv \pm kk_1 \mod |C|$.  A possible choice for the integers $c,n_0$ and $n_1$ is $c=\frac{M}{m}$, $n_0=\frac{M}{m_{k_0,C}}$ and $n_1=-\frac{M}{m_{k_1,C}}$ with $M=\lcm(m,m_{k_0,C},m_{k_1,C})$.
For some particular choices of the $m_{k,C}$'s one can take the same exponent $c$ for every Bass unit $\Bass{k}{m}{g}$. For example, if $n$ is the exponent of $G$ then one could choose a divisor of $\varphi(n)$ for each $m_{k,C}$. With this choice one can take $c=\varphi(n)$, $n_0= m\frac{\varphi(n)}{m_{k_0,C}}$ and $n_1= -m\frac{\varphi(n)}{m_{k_1,C}}$.

\section{A basis formed by products of Bass units which are powers of cyclotomic units}

In this section $p$ is a prime integer. We obtain a set of multiplicatively independent units that generate a subgroup of finite index in $\U(\Z G)$, for $G$ a $p$-group which is either elementary abelian or cyclic. Each of these units is both a product of Bass units and a power of a cyclotomic unit.
In both cases we will use the fact that $\{\eta_k(\zeta_{p^n})\mid 1<k< \frac{p^n}{2}, p\nmid k\}$ generates a free abelian subgroup of finite index in $\U(\Z[\zeta_{p^n}])$ for each $n\ge 1$ (see \cite[Theorem 8.2]{1982Washington}).

\begin{proposition}\label{ElementaryAbelian}
Let $p$ be a prime integer and $G$ an elementary abelian $p$-group. For every $H\in \Hc(G)$ fix an $a_H\in G$ with $G=\GEN{a_H,H}$. Then the set
    $$\left\{x_{k,H}=\prod_{h\in H}\Bass{k}{O_p(k)}{a_Hh} :  1<k<\frac{p}{2}, H\in \Hc(G)\setminus\{G\}\right\}$$
is multiplicatively independent, generates a free abelian subgroup of finite index in $\U(\Z G)$ and each of its elements $x_{k,H}$ is a power of a cyclotomic unit of $\Q G$ and it belongs to $B(G)$.
\end{proposition}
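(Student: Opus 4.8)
The plan is to break the statement of Proposition~\ref{ElementaryAbelian} into its three assertions and handle them in the order: (i) each $x_{k,H}$ is a power of a cyclotomic unit of $\Q G$ and lies in $B(G)$; (ii) the $x_{k,H}$ generate a subgroup of finite index in $\U(\Z G)$; (iii) they are multiplicatively independent, which combined with (ii) forces them to be a basis of a free abelian subgroup of finite index. The key observation driving everything is that for an elementary abelian $p$-group, $\Hc(G)$ consists of the subgroups of index $1$ or $p$, so for $H\in\Hc(G)\setminus\{G\}$ we have $[G:H]=p$ and $G=\GEN{a_H}\times H$ with $|a_H|=p$.

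For (i), I would apply Lemma~\ref{ProductBassCyclic} with $g=a_H$ and $K=H$: here $H\cap K=H$, so $h=|H|$ and $t=[K:H\cap K]=1$, and taking $k$ coprime to $t=1$ trivially and $m=O_p(k)$ (so $k^m\equiv 1\bmod p=|a_H h|$ for all $h\in H$), the lemma gives $\rho_H(x_{k,H})=\eta_k(\rho_H(a_H))^{m|H|}$. Since $\rho_H(a_H)$ is a primitive $p$-th root of unity, $\rho_H(x_{k,H})$ is a power of the cyclotomic unit $\eta_k(\zeta_p)$. For any other $K\in\Hc(G)$, I claim $\rho_K(x_{k,H})=1$: if $K=G$ then $\rho_G$ is trivial and $\eta_k(1)=1$; if $K\ne G,H$ then $[G:K]=p$ and one computes via Lemma~\ref{ProductBassCyclic} that $t_K=[H:H\cap K]=p$ (since $H\not\subseteq K$ as $K\ne H$ and both have index $p$), so $\rho_K(x_{k,H})=\eta_k(\rho_K(a_H)^{p})^{\cdots}=\eta_k(1)^{\cdots}=1$ because $\rho_K(a_H)^p=1$. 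Thus $x_{k,H}$ is, under the Perlis-Walker isomorphism $f$, the element with $H$-component equal to $\eta_k(\zeta_p)^{m|H|}$ and all other components $1$; since $\eta_k(\zeta_p)\in\Z[\zeta_p]$ has its inverse also in $\Z[\zeta_p]$ and $x_{k,H}=\prod_{h\in H}\Bass{k}{O_p(k)}{a_Hh}$ is manifestly in $B(G)$, assertion (i) is done — and moreover $x_{k,H}$ is an integral unit, so it lies in $\U(\Z G)$, not just $\U(\O)$.

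For (ii), the strategy is exactly the one from the introduction: $\U(\Z G)$ has finite index in $\U(\O)=\prod_{H\in\Hc(G)}\U(\Z[\zeta_{[G:H]}])$, so it suffices that $\GEN{x_{k,H}}$ has finite index in $\U(\O)$, for which it is enough that the $x_{k,H}$ cover, up to finite index, each factor. The components with $[G:H]=1$ contribute $\U(\Z)=\{\pm1\}$, which is finite, so they are irrelevant. For each $H$ with $[G:H]=p$, the units $x_{k,H}$ ($1<k<p/2$, $p\nmid k$) project in the $H$-component to $\eta_k(\zeta_p)^{m|H|}$ and to $1$ elsewhere; by the cited \cite[Theorem~8.2]{1982Washington} the $\eta_k(\zeta_p)$ with $1<k<p/2$ generate a free abelian subgroup of finite index in $\U(\Z[\zeta_p])$, hence so do their fixed powers $\eta_k(\zeta_p)^{m|H|}$. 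Running over all $H$ of index $p$ gives finite index in the whole product $\U(\O)$, hence in $\U(\Z G)$, proving (ii).

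For (iii), I would do the usual rank count: by a theorem of Higman the torsion-free rank of $\U(\Z G)$ equals the number of $x_{k,H}$ in our set, whence finite-index generation forces multiplicative independence. The number of $H\in\Hc(G)\setminus\{G\}$ is the number of index-$p$ subgroups of $G\cong(\Z/p)^n$, namely $\frac{p^n-1}{p-1}$; for each such $H$ there are $\frac{p-3}{2}$ admissible values of $k$ when $p$ is odd (and none when $p=2$, in which case the set is empty and both sides of the rank identity vanish). On the other hand the rank formula $r=\frac{1+k_2+|G|-2c}{2}$ with $k_2=0$ for $p$ odd and $c=1+\frac{p^n-1}{p-1}$ gives $r=\frac{1+p^n-2-2\frac{p^n-1}{p-1}}{2}=\frac{(p^n-1)-2\frac{p^n-1}{p-1}}{2}=\frac{p^n-1}{2}\cdot\frac{p-3}{p-1}=\frac{p^n-1}{p-1}\cdot\frac{p-3}{2}$, which matches $|\{x_{k,H}\}|$ exactly. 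The main obstacle — such as it is — is the bookkeeping in step (i): correctly identifying $t_K=[H:H\cap K]$ for the various $K\in\Hc(G)$ to conclude that all non-$H$ projections are trivial, and making sure the special cases $p=2$ (empty set) and the index-$1$ subgroup are handled cleanly. Everything else is a direct application of Lemma~\ref{ProductBassCyclic}, Washington's theorem, and Higman's rank formula.
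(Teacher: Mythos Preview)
Your proof is correct and, for parts (i) and (ii), follows the paper's argument essentially verbatim: you apply Lemma~\ref{ProductBassCyclic} to compute $\rho_K(x_{k,H})$, obtaining $\eta_k(\zeta_p)^{O_p(k)|H|}$ when $K=H$ and $1$ otherwise, and then invoke the fact that the $\eta_k(\zeta_p)$ with $1<k<p/2$ generate a finite-index subgroup of $\U(\Z[\zeta_p])$.

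For part (iii) you take a different, though equally valid, route. The paper deduces multiplicative independence directly from the projection formula: since distinct $x_{k,H}$ are supported in distinct simple components (different $H$'s) and within each component project to powers of the multiplicatively independent cyclotomic units $\eta_k(\zeta_p)$, any relation $\prod x_{k,H}^{e_{k,H}}=1$ forces, componentwise, all $e_{k,H}=0$. You instead count that the number of generators equals $\frac{p^n-1}{p-1}\cdot\frac{p-3}{2}$ and match this against Higman's rank formula; since a generating set of a finite-index subgroup whose size equals the rank must be independent, you are done. The paper's argument is shorter and needs no external rank formula, while yours has the mild advantage of giving an explicit closed form for the rank as a byproduct. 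Both are fine.
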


\begin{proof}
Clearly $\rho_G(\Z G)=\Z$ and if $H\in \Hc(G)\setminus \{G\}$ then $\rho_H(\Z G) = \Z[\zeta_p]$.
Let $K\in \Hc(G)$.
By Lemma~\ref{ProductBassCyclic}, $\rho_K(x_{k,H}) = \eta_k(\rho_K(a_H)^{[K:H\cap K]})^{O_p(k)|H\cap K|}$. Furthermore, $\rho_H(a_H)=\zeta_p$ and, if $K\ne H$ then $[K:H\cap K]=p$ and hence $\rho_K(a_H)^{[K:H\cap K]}=1$. Thus
    $$\rho_K(x_{k,H}) = \left\{\begin{array}{ll}
    \eta_k(\zeta_p)^{O_p(k)|H|}, & \text{if } K=H;\\ 1, & \text{otherwise}.
    \end{array}\right.$$
Since $\eta_k(\zeta_p):1<k<\frac{p}{2}$ are multiplicatively independent and generate a subgroup of finite index in $\U(\Z[\zeta_p])$, the result follows.
\end{proof}

\begin{proposition}\label{CyclicpGroup}
Let $G=\GEN{a}$ be a cyclic group of order $p^n$ with $p$ prime. For every $i=0,1,\dots,n$, let $G_i=\GEN{a^{p^{n-i}}}$, the subgroup of $G$ of order $p^i$.
Let $k$ be a positive integer coprime to $p$. For every $0\le j \le s \le n$ we construct recursively the following products of Bass units of $\Z G$:
$$b_s^s(k)=1,$$ and, for $0\le j\le s-1$,
$$b_j^s(k)= \left(\prod_{h\in G_j}\Bass{k}{O_{p^n}(k)}{a^{p^{n-s}}h}\right)^{p^{s-j-1}} \left( \prod_{i=j+1}^{s-1}b_i^s(k)\inv \right) \left(\prod_{i=0}^{j-1} b_i^{s+i-j}(k)\inv\right).$$
Then
    $$\left\{b_j^n(k) :  1<k<\frac{p^{n-j}}{2}, 0\le j\le n, p\nmid k\right\}$$
is multiplicatively independent, generates a free abelian subgroup of finite index in $\U(\Z G)$ and each $b_j^{n}(k)$ is a power of a cyclotomic unit of $\Q G$ and belongs to $B(G)$.
\end{proposition}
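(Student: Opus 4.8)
The plan is to reduce all three assertions to a single computation: the images of the $b_j^s(k)$ under the Perlis--Walker projections. Since $G$ is cyclic of order $p^n$ we have $\Hc(G)=\{G_0,G_1,\dots,G_n\}$, and $\rho_{G_l}$ maps $\Q G$ onto $\Q(\zeta_{p^{n-l}})$ with $\rho_{G_l}(a)$ a primitive $p^{n-l}$-th root of unity. Put $m=O_{p^n}(k)$. I claim that for all $0\le j\le s\le n$ and all $0\le l\le n$,
\[
\rho_{G_l}\bigl(b_j^s(k)\bigr)=
\begin{cases}
\eta_k\bigl(\rho_{G_j}(a^{p^{n-s}})\bigr)^{m\,p^{s-1}}, & \text{if } l=j<s,\\
1, & \text{otherwise}.
\end{cases}
\]
When $l=j<s$, the root of unity $\rho_{G_j}(a^{p^{n-s}})=\rho_{G_j}(a)^{p^{n-s}}$ is primitive of order $p^{s-j}$, so the right-hand side is a power of a cyclotomic unit of $\Q(\zeta_{p^{n-j}})$. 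Granting this, the last part of the Proposition is immediate: each $b_j^n(k)$ is, by an obvious induction on the recursion, a product of Bass units of $\Z G$ and their inverses, hence lies in $B(G)$; and taking $s=n$, $b_j^n(k)$ projects to $1$ on all components but the $j$-th, where it equals $\eta_k(\rho_{G_j}(a))^{m\,p^{n-1}}$, so $b_j^n(k)=u^{m\,p^{n-1}}$ where $u$ is the cyclotomic unit of $\Q G$ whose only nontrivial projection is $\rho_{G_j}(u)=\eta_k(\rho_{G_j}(a))$.

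To prove the claim I would argue by a double induction --- an outer induction on $s$, and for fixed $s$ a downward induction on $j$ --- so that the induction hypothesis provides the formula for all $(s',j')$ with $s'<s$, and for $s'=s$ with $j'>j$; the base case is $j=s$, where $b_s^s(k)=1$. For the step with $j<s$, apply $\rho_{G_l}$ to the three factors
\[
b_j^s(k)=\underbrace{\Bigl(\textstyle\prod_{h\in G_j}\Bass{k}{m}{a^{p^{n-s}}h}\Bigr)^{p^{s-j-1}}}_{(A)}\;\;
\underbrace{\prod_{i=j+1}^{s-1}b_i^s(k)\inv}_{(B)}\;\;
\underbrace{\prod_{i=0}^{j-1}b_i^{s+i-j}(k)\inv}_{(C)}.
\]
Lemma~\ref{ProductBassCyclic}, with $K=G_j$ and $g=a^{p^{n-s}}$ (so $|G_l\cap G_j|=p^{\min(l,j)}$ and $[G_j:G_l\cap G_j]=p^{\max(0,j-l)}$), evaluates $\rho_{G_l}((A))$ in closed form, while each factor of $(B)$ and $(C)$ is read off from the induction hypothesis (the superscripts in $(C)$ lie between $s-j$ and $s-1$, hence are $<s$). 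One then separates the cases $l=j$, $l<j$, $j<l<s$, and $l\ge s$. If $l=j$, the factors $(B)$ and $(C)$ project to $1$ and $(A)$ gives exactly the asserted value. If $j<l<s$, the contribution of $(A)$ is cancelled by the single surviving factor $b_l^s(k)\inv$ of $(B)$. If $l<j$, it is cancelled by the single surviving factor $b_l^{s+l-j}(k)\inv$ of $(C)$; here one uses the identity $\rho_{G_l}(a^{p^{n-s}})^{p^{j-l}}=\rho_{G_l}(a^{p^{n-(s+l-j)}})=\rho_{G_l}(a)^{p^{n-s+j-l}}$ together with the coincidence of the exponents $m\,p^{l+s-j-1}=m\,p^{(s+l-j)-1}$. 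If $l\ge s$ (so $l\ne j$), every factor already projects to $1$, since $\rho_{G_l}(a^{p^{n-s}})=1$. The main obstacle is precisely this bookkeeping: tracking the orders of the roots $\rho_{G_l}(a^{p^e})$ and checking that in each surviving pair the roots of unity agree and the $p$-power exponents match, so that the cancellations are exact.

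Finally, to obtain multiplicative independence and finite index, let $f=\prod_{l=0}^{n}\rho_{G_l}$ be the restriction to $\U(\Z G)$ of the Perlis--Walker isomorphism; it is injective with image in $\prod_{l=0}^{n}\U(\Z[\zeta_{p^{n-l}}])$. By the claim with $s=n$, for $0\le j\le n-1$ the element $f(b_j^n(k))$ is supported on the single component $\U(\Z[\zeta_{p^{n-j}}])$, where it equals $\eta_k(\rho_{G_j}(a))^{m\,p^{n-1}}$ (the value $j=n$ contributes nothing, as $b_n^n(k)=1$ and the range of $k$ is empty). Since $\rho_{G_j}(a)$ is a primitive $p^{n-j}$-th root of unity, $\eta_k(\rho_{G_j}(a))$ is the image of $\eta_k(\zeta_{p^{n-j}})$ under a ring automorphism of $\Z[\zeta_{p^{n-j}}]$, so by \cite[Theorem~8.2]{1982Washington} the set $\{\eta_k(\rho_{G_j}(a)):1<k<\frac{p^{n-j}}{2},\ p\nmid k\}$ is a basis of a free abelian subgroup of finite index in $\U(\Z[\zeta_{p^{n-j}}])$; replacing each member by its $O_{p^n}(k)\,p^{n-1}$-th power again yields a multiplicatively independent set generating a subgroup of finite index. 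As the blocks indexed by distinct $j$ sit on distinct components and the one missing component $\U(\Z[\zeta_{p^{0}}])=\{\pm1\}$ is finite, $\{f(b_j^n(k))\}$ is multiplicatively independent and generates a subgroup of finite index in $\prod_{l=0}^{n}\U(\Z[\zeta_{p^{n-l}}])$. Pulling back along the injection $f$ then shows that the set in the statement is multiplicatively independent and generates a subgroup of finite index in $\U(\Z G)$, completing the proof.
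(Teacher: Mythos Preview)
Your proof is correct and follows essentially the same approach as the paper: both establish the key projection formula for $\rho_{G_l}(b_j^s(k))$ by a double induction on $s$ and then downward on $j$, applying Lemma~\ref{ProductBassCyclic} to the product-of-Bass-units factor and reading off the two correction products from the induction hypothesis, and then deduce multiplicative independence and finite index from Washington's theorem applied component by component. Your write-up is in fact slightly more explicit than the paper's, both in treating all indices $0\le l\le n$ (rather than only $l\le s$) and in spelling out the final independence/finite-index argument.
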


\begin{proof}
Following the same strategy as in the proof of Proposition~\ref{ElementaryAbelian} it is enough to prove the following
\begin{equation}\label{rhobs}
\rho_{G_{j_1}}(b_j^s(k)) = \left\{
\begin{array}{ll}
\eta_k(\zeta_{p^{s-j}})^{O_{p^n}(k)p^{s-1}}, & \text{if } j=j_1;\\
1, & \text{if } j\neq j_1.
\end{array}\right.
\end{equation}
for every $0\le j,j_1\le s\le n$.
Again we use a double induction, first on $s$ and, for fixed $s$, on $s-j$ (roughly said, $s$ is a replacement for (IH1) and $s-j$ for (IH2)). The minimal cases ($s=0$ and $s=j$) are obvious, so assume that $s>1$, $j<s$ and (\ref{rhobs}) holds when $s$ is replaced by a smaller value $s_1$ and any $0\le j,j_1\le s_1$ and when, with $s$ fixed, $j$ is replaced by a bigger value.

Let $u(k)=\prod_{h\in G_j}\Bass{k}{O_{p^n}(k)}{a^{p^{n-s}}h}$. By Lemma~\ref{ProductBassCyclic}, if $0\le i \le j$ then
    \begin{equation}\label{CyclicpGroup1}
    \rho_{G_i}(u(k)) = \eta_k(\rho_{G_i}(a^{p^{n-s}})^{p^{j-i}})^{O_{p^n}(k)p^i} = \eta_k(\zeta_{p^{s-i}}^{p^{j-i}})^{O_{p^n}(k)p^i},
    \end{equation}
and if $j\le i\le n$ then
    \begin{equation}\label{CyclicpGroup1b}
    \rho_{G_i}(u(k)) = \eta_k(\rho_{G_i}(a^{p^{n-s}}))^{O_{p^n}(k)p^j} = \eta_k(\zeta_{p^{s-i}})^{O_{p^n}(k)p^j}.
    \end{equation}
By (\ref{CyclicpGroup1}) and the induction hypothesis on $s$, we have
\begin{equation}\label{CyclicpGroup2}
\rho_{G_{j_1}}(b_i^{s+i-j}(k)) = \left\{
\begin{array}{ll}
\eta_k(\zeta_{p^{s-j}})^{O_{p^n}(k)p^{s+i-j-1}} = \rho_{G_{j_1}}(u(k))^{p^{s-j-1}},& \text{if } j_1=i<j;\\
1, & \text{if } j_1\ne i<j.
\end{array}\right.
\end{equation}
By (\ref{CyclicpGroup1b}) and the induction hypothesis on $s-j$, we have
\begin{equation}\label{CyclicpGroup3}
\rho_{G_{j_1}}(b_i^s(k)) = \left\{
\begin{array}{ll}
\eta_k(\zeta_{p^{s-i}})^{O_{p^n}(k)p^{s-1}}=\rho_{G_{j_1}}(u(k))^{p^{s-j-1}}, & \text{if } j<i=j_1;\\
1, & \text{if } j<i\neq j_1
\end{array}\right.
\end{equation}
Now, combining (\ref{CyclicpGroup1}), (\ref{CyclicpGroup2}) and (\ref{CyclicpGroup3}) we have
    $$\rho_{G_{j_1}}(b_j^s(k)) = \left\{
    \begin{array}{ll}
    \rho_{G_j}(u(k))^{p^{s-j-1}} = \eta_k(\zeta_{p^{s-j}})^{O_{p^n}(k)p^{s-1}}, & \text{if } j=j_1;\\
    \rho_{G_{j_1}}(u(k))^{p^{s-j-1}} \rho_{G_{j_1}}(b_{j_1}^s(k))^{-1} = 1,&  \text{if } j< j_1;\\
    \rho_{G_{j_1}}(u(k))^{p^{s-j-1}} \rho_{G_{j_1}}(b_{j_1}^{s+j_1-j}(k))^{-1} = 1,&  \text{if } j> j_1;
    \end{array}
    \right.$$
as desired.
\end{proof}

\renewcommand{\bibname}{References}
\bibliographystyle{amsalpha}
\bibliography{references}

\end{document}